\documentclass[12pt]{amsart}

\usepackage{amscd,mathrsfs,epsfig,amsthm,amssymb,amsmath}
\usepackage[all]{xy} 
\usepackage[T1]{CJKutf8}
\usepackage[sans]{dsfont}  
\usepackage{hyperref}
\usepackage{tabularx}
\usepackage{rotating}

\usepackage{graphicx} 
\usepackage{cite}
\usepackage{url}

\newtheorem{theorem}{Theorem}[subsection]
\newtheorem{corollary}[theorem]{Corollary}

\newtheorem{definition}[theorem]{Definition}

\newtheorem{remark}[theorem]{Remark}

\newtheorem*{thma}{Theorem A}
\newtheorem*{thmb}{Theorem B}

\textwidth 15cm \textheight 23cm \topmargin=-1cm
\oddsidemargin=0.4cm \evensidemargin=0.4cm

\allowdisplaybreaks[2]

\def\calC{{\mathcal C}}

\def\calJ{{\mathcal J}}

\def\calP{{\mathcal P}}

\def\Hom{\mathop{\rm Hom}\nolimits}

\def\lim{\mathop{\varinjlim}\nolimits}
 
\def\Ob{\mathop{\rm Ob}\nolimits} 
\def\Mor{\mathop{\rm Mor}\nolimits}

\DeclareMathOperator{\Ab}{\rm Ab}
\DeclareMathOperator{\rMod}{Mod-}

\DeclareMathOperator{\Add}{\rm Add}
\DeclareMathOperator{\dgCat}{\rm dg \mbox{-} Cat}
\DeclareMathOperator{\dgCh}{{\rm Ch}_{\it dg}}
\DeclareMathOperator{\Ch}{{\rm Ch}}
\DeclareMathOperator{\op}{\rm op}

\begin{document}

\title[modules over dg-representations of small categories]{a characterization of modules over dg-representations of small categories}

\author{Mawei Wu}
\address{School of Mathematics and Statistics, Lingnan Normal University, Zhanjiang, Guangdong 524048, China}
\email{wumawei@lingnan.edu.cn}

\subjclass[2020]{18A25, 18E10, 18G35, 16S90, 16D90, 18E05}

\keywords{functor category, pseudofunctor, torsion pair, differential graded category, Grothendieck construction, Grothendieck category, preadditive category}



\begin{abstract}
Let $\calC$ be a small category and let $R$ be a dg-representation of the category $\calC$, that is, a pseudofunctor from a small category to the category of small dg $k$-categories, where $k$ is a commutative unital ring. In this paper, we mainly study the category $\rMod R$ of right modules over $R$. We characterize it as an ordinary category of dg-modules over a (differential graded) dg-category $Gr(R)$, where $Gr(R)$ is the linear Grothendieck construction of $R$. This characterization generalizes a result (\cite[Theorem 3.18]{EV17}) of Estrada and Virili to the dg-category context. Furthermore, as some applications of the main characterization theorem, we classify the hereditary torsion pairs, (split) TTF triples and Abelian recollements in $\rMod R$ respectively.
\end{abstract}

\maketitle

\tableofcontents

\section{Introduction}
Let $\calC$ be a small category and let $R: \calC \to \Add$ be a representation of the category $\calC$ which is a pseudofunctor from a small category $\calC$ to the category of small preadditive categories $\Add$. In \cite{EV17}, the authors study the category $\rMod R$ of right modules over $R$. They proved that the category $\rMod R$ is a Grothendieck category (see \cite[Theorem 3.18]{EV17}). Recently, we reproved this result using different method (see \cite[Theorem A]{Wu24a}). Roughly speaking, the category $\rMod R$ is like `gluing' all the right $R(i)$-modules together, where $R(i)$ is an additive category for each $i \in \Ob \calC$. Our main motivation is trying to replace the additive categories $R(i)$ by the dg-categories, hoping to obtain some analogous results as that of \cite{Wu24a}, so that we can `glue' all the right dg-modules together.

Motivating by the definitions in \cite{EV17}, we newly introduce the notions of \emph{the dg-representation of a small category} and \emph{the right modules over it}. Let $\calC$ be a small category and $k$ be a commutative unital ring, a dg-representation $R$ of the category $\calC$ is defined to be a pseudofunctor from a small category $\calC$ to the category of small dg $k$-categories $\dgCat_k$ (see Definition \ref{rep}). Given a dg-representation $R$ of the category, one can define the category $\rMod R$ of all right modules over $R$ (see Definition \ref{rmod}). Let $\dgCh(k)$ be \emph{the dg-category of chain complexes of $k$-modules}. All complexes considered in this paper are cochain complexes. Our first result is to characterize $\rMod R$ as an \emph{ordinary} category (\emph{not} dg-category) of dg-modules over $Gr(R)$, which generalizes results \cite[Theorem 3.18]{EV17} and \cite[Theorem A]{Wu24a} as well as \cite[Theorem 3.0.1]{Wu24}(or see \cite[Proposition 5]{How81}), to the dg-category context, where $Gr(R)$ is the linear Grothendieck construction of $R$ (see Definition \ref{lingrocon}).

\begin{thma} ([Theorem \ref{howeplus} and Corollary \ref{Groproj}])
Let $\calC$ be a small category and let $R: \calC \to \dgCat_k$ be a dg-representation of the category $\calC$, then we have the following equivalence
 $$
 \rMod R \simeq (Gr(R)^{\op}, \dgCh(k)).
 $$ 
Consequently, the category of right $R$-modules $\rMod R$ is a Grothendieck category and has a projective generator.     
\end{thma}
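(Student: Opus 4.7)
The plan is to construct an equivalence directly, via mutually inverse functors $\Phi: \rMod R \to (Gr(R)^{\op}, \dgCh(k))$ and $\Psi$ in the other direction, and then to deduce the Grothendieck category and projective generator statements from standard facts about module categories over a single small dg-category.

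First I would unfold the definitions on both sides. A right $R$-module $M$ packages, for each $i \in \Ob\calC$, a right dg $R(i)$-module $M_i$, together with transition data indexed by the morphisms $\alpha: i \to j$ of $\calC$ and coherence isomorphisms coming from the pseudofunctor structure on $R$. On the other hand, the linear Grothendieck construction $Gr(R)$ is a dg-category whose objects are pairs $(i,x)$ with $i \in \Ob\calC$ and $x \in \Ob R(i)$, and whose hom-complex between $(i,x)$ and $(j,y)$ is assembled from the hom-complexes of $R$ indexed by morphisms $\alpha: i \to j$ in $\calC$; composition in $Gr(R)$ is controlled exactly by the pseudofunctor 2-cells of $R$.

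Next I would define $\Phi$ by the natural formula $\Phi(M)(i,x) := M_i(x)$, letting $\Phi(M)$ act on the fiberwise part of $Gr(R)^{\op}$ via the right $R(i)$-action on $M_i$ and on the horizontal part via the transition maps of $M$. Functoriality of $\Phi(M)$, i.e.\ compatibility with composition in $Gr(R)^{\op}$, is precisely the coherence axiom imposed on an $R$-module; functoriality of $\Phi$ itself follows because morphisms of $R$-modules are by definition compatible with both kinds of structure. The inverse $\Psi$ is defined by restriction along the dg-inclusions $R(i)^{\op} \hookrightarrow Gr(R)^{\op}$ (recovering the fiberwise modules $M_i$) together with evaluation on the horizontal morphisms of $Gr(R)^{\op}$ (recovering the transition maps). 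Both composites $\Phi\Psi$ and $\Psi\Phi$ are then naturally isomorphic to the identities by direct inspection.

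The main obstacle will be bookkeeping the pseudofunctor coherence: because $R$ is only a pseudofunctor, the invertible 2-cells $R(\beta)R(\alpha) \Rightarrow R(\beta\alpha)$ and the unitors appear both in the definition of composition in $Gr(R)$ and in the compatibility axioms for right $R$-modules, and one must check that these two appearances match on the nose. The dg-enrichment itself introduces no new difficulty beyond keeping track that each assignment is a morphism of complexes of $k$-modules. For the consequence, I would invoke the well-known fact that for any small dg-category $D$, the ordinary category of $\dgCh(k)$-valued functors on $D^{\op}$ is a Grothendieck category in which the coproduct of the representable modules $Gr(R)(-,(i,x))$ is a projective generator; both properties then transport across the equivalence back to $\rMod R$.
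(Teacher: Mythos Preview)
Your construction of the equivalence is essentially the paper's: the paper defines $\Phi(M)({}_ix)=M_i(x)$ with the action on a morphism $(f_a^\bullet)$ given by $M(a)_x\circ M_j(f_a^\bullet)$, and builds $\Psi$ by restricting along the inclusions $R(i)\hookrightarrow Gr(R)$ and reading off the transition maps from the horizontal morphisms, exactly as you outline. The coherence bookkeeping you flag is indeed the only nontrivial point, and the paper likewise defers it to a routine check (citing the additive predecessor of this result).

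There is, however, a genuine slip in your last paragraph. In the \emph{ordinary} category $(Gr(R)^{\op},\dgCh(k))$ a morphism out of the representable $Gr(R)(-,{}_ix)$ corresponds, by the enriched Yoneda lemma, to a degree-$0$ cycle of the target at ${}_ix$; hence $\Hom(Gr(R)(-,{}_ix),M)\cong Z^0 M_i(x)$, and this functor is neither exact nor able to detect the components of $M$ in nonzero degrees. Already when $Gr(R)$ is the one-object dg-category $k$ concentrated in degree $0$, the representable is $k[0]$, which is not a generator of $\Ch(k)$. So the coproduct of the bare representables is neither projective nor a generator. The paper fixes this by tensoring with the disk complexes: its projective generating set is $\{\,Gr(R)(-,{}_ix)\oslash D^n(k)\mid {}_ix\in\Ob Gr(R),\ n\in\mathbb{Z}\,\}$, obtained by combining the fact that $\{D^n(k)\}_{n\in\mathbb{Z}}$ generates $\Ch(k)$ with a general result on enriched functor categories. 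Once you replace your representables by these, your argument goes through and matches the paper's.
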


In the Theorem A above, the category $(Gr(R)^{\op}, \dgCh(k))$ is, though its homomorphisms set has a natural dg-structure, an ordinary category of right dg-modules over the dg-category $Gr(R)$. 

Using a result of Freyd (see \cite[Theorem 3.1]{Mit72}), we can characterize $\rMod R$ as an Ab-valued functor category: $\rMod R \simeq (\calP^{\op}, \Ab)$, for an explicit full subcategory $\calP$ of $\rMod R$ (see Corollary \ref{redab}). In this paper, we will denote by Ab the category of Abelian groups. Then we can classify the hereditary torsion pairs, (split) TTF triples and Abelian recollements in $\rMod R$, for the definitions of all these, one can see Section \ref{tta}.

\begin{thmb} ([Theorem \ref{htp}, \ref{ttf}, \ref{sttf} and \ref{ar}])
Let $\calC$ be a small category and let $R: \calC \to \dgCat_k$ be a dg-representation of the category $\calC$. Then the hereditary torsion pairs, (split) TTF triples and Abelian recollements of $\rMod R$ can be classified respectively.
\end{thmb}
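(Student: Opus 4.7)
The plan is to leverage Theorem A together with its immediate consequence mentioned in the introduction, namely Corollary \ref{redab}, which identifies $\rMod R$ with an Ab-valued functor category $(\calP^{\op}, \Ab)$ for an explicit full subcategory $\calP$ of $\rMod R$ obtained via Freyd's theorem. This single reduction transports all four classification problems from the Grothendieck category $\rMod R$ to an ordinary functor category with a small preadditive indexing category, for which well-developed classification machinery is available, and it is this reduction that does the real work.

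For Theorem \ref{htp} on hereditary torsion pairs, I would invoke the classical correspondence (going back to Gabriel) between hereditary torsion theories in a Grothendieck category and localizing Serre subcategories, together with the fact that in $(\calP^{\op}, \Ab)$ the representable functors $h_P$ form a generating set of finitely generated projectives. A hereditary torsion pair is then determined by the collection of subfunctors of the $h_P$ whose quotient is torsion, giving a bijection between hereditary torsion pairs in $\rMod R$ and Gabriel topologies on the small preadditive category $\calP$.

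For Theorems \ref{ttf} and \ref{sttf} on (split) TTF triples, I would adapt the classification for module categories due to Jans (TTF triples over a ring correspond to idempotent two-sided ideals; split TTF triples correspond to ideals generated by a central idempotent) to the functor category setting. Concretely, TTF triples in $(\calP^{\op}, \Ab)$ correspond bijectively to idempotent ideals of $\calP$, while split TTF triples correspond to ideals generated by central idempotent natural transformations of $\Id_\calP$; the key step is to show that the torsion-torsion-free class is always of the form $\ker$ of restriction along a quotient $\calP \to \calP/\calI$. For Theorem \ref{ar} on Abelian recollements, I would use the known characterization of Abelian recollements of a functor category in terms of pairs of complementary idempotent ideals of $\calP$ (equivalently, suitably nondegenerate split TTF triples), and then translate back via Corollary \ref{redab}.

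The main obstacle is controlling the subcategory $\calP$ appearing in Corollary \ref{redab} with enough precision that idempotent ideals, Gabriel topologies, and recollement data can be read off intrinsically from the dg-representation $R: \calC \to \dgCat_k$ and the linear Grothendieck construction $Gr(R)$; once a convenient model of $\calP$ is fixed (for instance in terms of finitely generated projectives coming from the objects of $Gr(R)$), each of the four classifications amounts to transporting a well-known functor-category statement along the equivalence $\rMod R \simeq (\calP^{\op}, \Ab)$, after verifying that the constructions respect the dg structure encoded in $Gr(R)$.
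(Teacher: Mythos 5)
Your proposal is correct and matches the paper's strategy exactly: reduce $\rMod R$ to the Ab-valued functor category $(\calP^{\op}, \Ab)$ via Corollary~\ref{redab} (where $\calP$ is the full subcategory on the finitely generated projective generators $G_{_ix,n}$ coming from $Gr(R)$), and then transport the standard classification results for functor categories---linear Grothendieck topologies/Gabriel filters for hereditary torsion pairs, idempotent ideals for TTF triples, central idempotents for split TTF triples, and the idempotent-ideal/recollement correspondence for Abelian recollements---across this equivalence. The paper carries this out by citing the corresponding theorems of Parra--Saor\'in--Virili and Lambek--Lobovici--Neu, which are precisely the functor-category statements you describe.
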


This paper is organized as follows. In Section \ref{prelim}, we firstly recall some definitions about (preadditive) categories and the linear Grothendieck topologies. Then, the definitions of dg-representation of a small category and its right modules are introduced. Finally, the definitions of the linear Grothendieck constructions as well as the concepts of torsion pairs, TTF triples and Abelian recollements are recorded. In Section \ref{cha}, a characterization of the category of modules $\rMod R$ over a dg-representation of a small category will be given. By a theorem of Freyd, we can also characterize $\rMod R$ as an Ab-valued functor category. In Section \ref{app}, some applications of the main characterization theorem will be given. To be more specific, the hereditary torsion pairs, (split) TTF triples and Abelian recollements of the category $\rMod R$ of modules over a dg-representation of a small category will be classified respectively.

\section{Preliminaries} \label{prelim}
In this section, we will firstly recall some definitions about (preadditive) categories and the linear Grothendieck topologies. Then, the definitions of dg-representation of a small category and its right modules will be newly introduced. Finally, the definitions of the linear Grothendieck constructions as well as the concepts of torsion pairs, TTF triples and Abelian recollements will be recorded.

\subsection{Categories and linear Grothendieck topologies}

\subsubsection{Categories}

In this subsection, we will recall some basic definitions about category theory, and mostly about the preadditive categories. This part is mostly taken from \cite[Section 2.1.1]{Wu24}, and we record it here for the  convenience of the readers on the one hand, and fixing the notations on the other hand.

\begin{definition}
Let $\calC$ be a small category. It is said to be \emph{finite} if $\Mor \calC$ is finite, and it is \emph{object-finite} if $\Ob \calC$ is finite.    
\end{definition}

\begin{definition}
An endomorphism $f \in \Hom_{\calC}(x, x)$ is called an \emph{idempotent endomorphsim} if it is an idempotent (i.e. $f^2=f$).   
\end{definition}

\begin{definition}
A category $\mathcal{A}$ is \emph{preadditive} if each morphism set $\Hom_{\mathcal{A}}(x, y)$ is endowed with the structure of an Abelian group.   \end{definition}

Given a small preadditive category $\mathcal{A}$, one can associate it with the following two categories: the \emph{additive closure} $\widehat{\mathcal{A}}$ and the \emph{Cauchy completion} $\widehat{\mathcal{A}}_{\oplus}$ of it (namely, the idempotent completion  of the additive closure of $\mathcal{A}$,  see \cite[Section 1.1 for their precise definitions]{PSV21}).

\begin{definition} (\cite[pages: 1148, 1150]{PSV21})
\begin{enumerate}    
\item  Let $\mathcal{A}$ be a small preadditive category, a \emph{(two-sided) ideal} of $\mathcal{A}$ is a subfunctor 
$$
\mathcal{I}(-, -) \leq \mathcal{A}(-, -): \mathcal{A}^{\rm op} \times \mathcal{A} \to \Ab. 
$$
That is, for any $f \in \Hom_{\mathcal{I}}(x, y)$ and $l \in \Hom_{\mathcal{A}}(y, y')$, $r \in \Hom_{\mathcal{A}}(x', x)$, $l \circ f \circ r \in \Hom_{\mathcal{I}}(x',y')$, which is a subgroup of $\Hom_{\mathcal{A}}(x',y')$. 

\item Let $\mathcal{I}, \mathcal{K}$ be two ideals of $\mathcal{A}$, the \emph{product ideal} $\mathcal{I} \cdot \mathcal{K}$ is defined as follows:
$$
(\mathcal{I} \cdot \mathcal{K})(x, y):=\left\{ \sum_{i=1}^{n} g_i \circ f_i \ |\ g_i \in \Hom_{\mathcal{I}}(z_i, y), f_i \in \Hom_{\mathcal{K}}(x, z_i)   \right\}.
$$

\item An ideal $\mathcal{I}$ is said to be \emph{idempotent} if $\mathcal{I} \cdot \mathcal{I}=\mathcal{I}$.
\end{enumerate}
\end{definition} 

Given a unitary ring, one can always construct a two-sided ideal generated by a given family of elements. The analogous construction in the preadditive categories is given as follows.

\begin{definition} (\cite[Definition 2.2]{PSV21})   
Let $f: x \to y$ be a morphism in $\mathcal{A}$ and let $\mathcal{M}$ be a set of morphisms of $\mathcal{A}$. Then
\begin{enumerate}
    \item \emph{the (two-sided) ideal of $\mathcal{A}$ generated by $f$}, denoted by $\mathcal{A}f\mathcal{A}: \mathcal{A}^{\rm op} \times \mathcal{A} \to \Ab$, is the subfunctor of $\mathcal{A}(-, -)$ such that $\mathcal{A}f\mathcal{A}(a, b)$ is the subgroup of $\mathcal{A}(a,b)$ generated by compositions $g \circ f \circ h$, where $h \in \mathcal{A}(a, x)$ and $g \in \mathcal{A}(y, b)$;
    \item \emph{the (two-sided) ideal of $\mathcal{A}$ generated by $\mathcal{M}$}, denoted by $\mathcal{A}\mathcal{M}\mathcal{A}: \mathcal{A}^{\rm op} \times \mathcal{A} \to \Ab$, is the sum
    $$
    \mathcal{A}\mathcal{M}\mathcal{A}:=\sum_{f \in \mathcal{M}} \mathcal{A}f\mathcal{A}.
    $$
That is, $\mathcal{A}\mathcal{M}\mathcal{A}(a, b)=\sum_{f \in \mathcal{M}} \mathcal{A}f\mathcal{A}(a, b)$, where the sum is the sum of subgroups of the Abelian group $\mathcal{A}(a, b)$. 
\end{enumerate}
\end{definition}

Now, let's recall the definition of the center of a preadditive category. 

\begin{definition} (\cite[page: 1147]{PSV21})   
Let $\mathcal{A}$ be a small preadditive category, the \emph{center} $Z(\mathcal{A})$ of $\mathcal{A}$ is the ring of self-natural transformations of the identity functor ${\rm Id}_{\mathcal{A}}$, that is,
$$
Z(\mathcal{A}):={\mathcal{F}un}(\mathcal{A}, \mathcal{A})({\rm Id}_{\mathcal{A}}, {\rm Id}_{\mathcal{A}}).
$$
where ${\mathcal{F}un}(\mathcal{A}, \mathcal{A})$ is the $\Ab$-enriched functor category.
\end{definition}

Let $\mathcal{A}$ be a small preadditive category. A \emph{right module $M$ over $\mathcal{A}$} is an (always additive) functor $M:\mathcal{A}^{\rm op} \to \Ab$. We denote by $(\mathcal{A}^{\op}, \Ab)$ \emph{the category of right $\mathcal{A}$-modules}. Given a class $\mathcal{S}$ of $\mathcal{A}$-modules and an $\mathcal{A}$-module $M$, one can construct a submodule ${\rm tr}_{\mathcal{S}}(M)$ of $M$ such that any map $S \to M$, with $S \in \mathcal{S}$, factors through the inclusion ${\rm tr}_{\mathcal{S}}(M) \to M$.
  
\begin{definition} (\cite[Definition 1.8]{PSV21})
Let $\mathcal{S}$ be a class of right $\mathcal{A}$-modules and $M$ a right $\mathcal{A}$-module, then the sum of the submodules of $M$ of the form ${\rm Im}(f)$, for some morphism $f: S \to M$ in $(\mathcal{A}^{\op}, \Ab)$, with $S \in \mathcal{S}$, is called \emph{the trace of $\mathcal{S}$ in $M$} and denoted by ${\rm tr}_{\mathcal{S}}(M)$.
\end{definition}

For a class $\mathcal{S}$ of right $\mathcal{A}$-modules, one can define a (two-sided) ideal ${\rm tr}_{\mathcal{S}}(\mathcal{A})$ of $\mathcal{A}$, which is called the trace of $\mathcal{S}$ in $\mathcal{A}$.

\begin{definition} (\cite[Section 1.3 and Section 2.2]{PSV21})
Let $\mathcal{A}$ be a preadditive category, $\mathcal{A}(-, -): \mathcal{A}^{\rm op} \times \mathcal{A} \to \Ab$ the regular $\mathcal{A}$-bimodule and $\mathcal{S}$ a class of right $\mathcal{A}$-modules. The assignment $x \mapsto {\rm tr}_{\mathcal{S}}(\mathcal{A}(-, x))$ defines a subfunctor of the functor $x \mapsto \mathcal{A}(-, x)$. Viewing this subfunctor as an $\mathcal{A}$-bimodule, then this $\mathcal{A}$-bimodule is called \emph{the trace of $\mathcal{S}$ in $\mathcal{A}$}, and denoted by ${\rm tr}_{\mathcal{S}}(\mathcal{A})$, it is a two-sided ideal of $\mathcal{A}$.
\end{definition}

\subsubsection{Linear Grothendieck topologies}
In this subsection, we will recall the definition of the linear Grothendieck topology on a preadditive category.  

\begin{definition} (\cite[Definition 2.1]{Low04}) \label{lintop}
 Let $\mathcal{A}$ be a preadditive category. A \emph{linear Grothendieck topology} on $\mathcal{A}$ is given by specifying, for every object $x$ in $\mathcal{A}$, a collection $\calJ(x)$ of subfunctors of $\Hom_{\mathcal{A}}(-, x)$ in $(\mathcal{A}^{\op}, \Ab)$ satisfying the following axioms:
 \begin{enumerate}
     \item $\Hom_{\mathcal{A}}(-, x) \in \calJ(x)$;
     \item for $S \in \calJ(x)$ and $f: y \to x$ in $\mathcal{A}$, the pullback $f^{*}S$ (i.e. the following diagram is a pullback) in $(\mathcal{A}^{\op}, \Ab)$ of $S$ along $f: \Hom_{\mathcal{A}}(-, y) \to \Hom_{\mathcal{A}}(-, x)$ is in $\calJ(y)$,
     $$
     \xymatrix{
      f^{*}S \ar[r] \ar[d] & S \ar[d] \\
     \Hom_{\mathcal{A}}(-, y) \ar[r]^{f}  & \Hom_{\mathcal{A}}(-, x); \\
     } 
     $$ 
     \item for $S_1 \in \calJ(x)$ and an arbitrary subfunctor $S_2$ of $\Hom_{\mathcal{A}}(-, x)$, if for every $f: y \to x \in S_1$, the pullback $f^{*}S_2$ is in $\calJ(y)$, it follows that $S_2$ is in $\calJ(x)$.
 \end{enumerate}
\end{definition}

\begin{remark}
If $\mathcal{A}$ is a single object category associated to a ring $k$, then the axioms in the Definition \ref{lintop} above correspond to those of a Gabriel topology on $k$ \cite{Gab62}.  
\end{remark}

\subsection{Modules over dg-representations of small categories}
In this subsection, the definitions of a dg-representation of a small category and its right modules will be introduced. For the theory of dg-categories and dg-modules, one can see \cite{Kel94, Kel06}. It is well known that a dg-category can be seen as a category enriched over the category of chain complexes. The following definition is the dg-version of the \cite[Definition 3.1]{EV17}.

\begin{definition} \label{rep}
Let $\calC$ be a small category, a \emph{dg-representation} of $\calC$ is a pseudofunctor $R: \calC \to \dgCat_k$, that is, $R$ consists of the following data:

\begin{enumerate}
    \item for each object $i \in \Ob \calC$, a dg-category $R(i)$;
    \item for all $i, j \in \Ob \calC$ and any morphism $a: i \to j$, a dg-functor $R(a): R(i) \to R(j)$;
    \item for each object $i \in \Ob \calC$, an isomorphism of dg-functors $\delta_i: 1_{R(i)} \overset{\sim}{\longrightarrow} R(1_i)$;
    \item for any pair of composable morphisms $a$ and $b$ in $\calC$, an isomorphism of dg-functors $\mu_{b,a}: R(b)R(a) \overset{\sim}{\longrightarrow} R(ba)$.    
\end{enumerate}
Furthermore, we suppose that the following axioms hold:

\begin{enumerate}
    \item[(Rep.1)] given three composable morphisms $i \overset{a}{\longrightarrow} j \overset{b}{\longrightarrow} k \overset{c}{\longrightarrow} h$ in $\calC$, the following diagram commutes
     $$
     \xymatrix @C=5pc {
      R(c)R(b)R(a) \ar[rr]^{R(c)\mu_{b,a}} \ar[d]_{\mu_{c,b} R(a)} & & R(c)R(ba) \ar[d]^{\mu_{c,ba}} \\
      R(cb)R(a) \ar[rr]^{\mu_{cb,a}}  & & R(cba), \\
     } 
     $$ 
    \item[(Rep.2)] given a morphism $(a: i \to j) \in \Mor \calC$, the following diagram commutes
     $$
     \xymatrix{
        & R(a) \ar[dl]_{R(a)\delta_i} \ar[dr]^{\delta_jR(a)} \ar@{=}[dd] & \\
   R(a)R(1_i) \ar[dr]_{\mu_{a,1_i}}     &  &  R(1_j)R(a) \ar[dl]^{\mu_{1_j,a}}  \\
      & R(a). &  \\
     } 
     $$ 
\end{enumerate}
\end{definition}

\begin{remark} \label{diff}
\begin{enumerate}
    \item Since the $\delta_i$ and $\mu_{b,a}$ are 2-isomorphisms 
    let's denote $\eta_i=(\delta_i)^{-1}$ and $\theta_{b,a}=(\mu_{b,a})^{-1}$ by their inverses respectively, namely
    $$
     \xymatrix @C=5pc {
     R(1_i) \ar@<.6ex>[r]^{\eta_i}  &  1_{R(i)} \ar@<.6ex>[l]^{\delta_i},  \\
     R(ba)  \ar@<.6ex>[r]^{\theta_{b,a}} &  R(b)R(a) \ar@<.6ex>[l]^{\mu_{b,a}}.   \\
     } 
     $$
     We will use these inverses to define $R$-modules (see Definition \ref{rmod}).
     \item Given a dg-representation $R: \calC \to \dgCat_k$ and a morphism $(a: i \to j) \in \Mor \calC$, we denote by 
     $$
     \xymatrix @C=5pc { 
     a_{!}: (R(i)^{\op}, \dgCh(k)) \ar@<.6ex>[r] & (R(j)^{\op}, \dgCh(k)) :a^{*} \ar@<.6ex>[l] \\
     }
     $$
     the Quillen adjunction $(a_{!}, a^{*})$ induced by $R(a)$ (see \cite[page 627]{Toe07}).
     \item Comparing to \cite[Definition 3.1]{EV17} (or \cite[Definition 2.1.1]{Wu24a}), one of the main difference is that, in condition $(1)$, we replace additive categories by dg-categories.
\end{enumerate}  
\end{remark}

Given a dg-representation of a small category, one can define its right modules.

\begin{definition} \label{rmod}
Let $R: \calC \to \dgCat_k$ be a dg-representation of the small category $\calC$. A \emph{right $R$-module} $M$ consists of the following data:

\begin{enumerate}
    \item for all $i \in \Ob \calC$, a right dg-$R(i)$-module $M_i: R(i)^{\op} \to \dgCh(k)$;
    \item for any morphism $a: i \to j$ in $\calC$, a homomorphism $M(a): a^*M_j \to M_i$. 
\end{enumerate}
Furthermore, we suppose that the following axioms hold:

\begin{enumerate}
    \item[(Mod.1)] given two morphisms $a: i \to j$ and $b: j \to k$ in $\calC$, the following diagram commutes:
     $$
     \xymatrix @C=5pc {
      a^*b^*M_k \ar[d]_{\theta_{b,a}1_{M_k}} \ar[r]^{a^*M(b)} & a^*M_j \ar[r]^{M(a)} & M_i \\
     (ba)^*M_k \ar[urr]_{M(ba)} & & \\
     } 
     $$ 
    \item[(Mod.2)] for all $i \in \Ob \calC$, the following diagram commutes:
    $$
    \xymatrix @C=5pc {
      (1_i)^*M_i \ar[r]^-{M(1_i)}  & M_i \\
      M_i \ar[u]^{\eta_i 1_{M_i}} \ar[ur]_{1_{M_i}}  & \\
     } 
    $$
\end{enumerate}
\end{definition}

For a dg-representation $R: \calC \to \dgCat_k$ of the small category $\calC$, we will denote \emph{the category of all right $R$-modules} by $\rMod R$, which is the main object studied in this paper.

\subsection{Linear Grothendieck constructions}
Let $\calC$ be a small category and let $R:\calC \to \dgCat_k$ be a dg-representation of $\calC$, since $R$ is a pseudofunctor, which can be viewed as a colax functor, therefore, by \cite[pages: 21-22]{AP22}, one can associate to $R$ a linear Grothendieck construction.  

\begin{definition} \label{lingrocon}
 Let $\calC$ be a small category and let $R:\calC \to \dgCat_k$ be a dg-representation of $\calC$. Then a category $Gr(R)$, called the \emph{linear Grothendieck construction} of $R$, is defined as follows:
 \begin{enumerate}
     \item $\Ob Gr(R):=\cup_{i \in \Ob \calC}\{i\} \times \Ob R(i)=\{~_ix:=(i,x) ~|~ i \in \Ob \calC, x \in \Ob R(i)\}$;
     \item for each $_ix, _jy \in \Ob Gr(R)$, we set
     $$
     Gr(R)(_ix, _jy):=\bigoplus_{a \in \calC(i,j)}R(j)(R(a)x,y)=\bigoplus_{a \in \calC(i,j)}\bigoplus_{n \in \mathbb{Z}}R(j)^n(R(a)x,y),
     $$
     where $R(j)(R(a)x,y)$ is a dg-$k$-module;
     \item for each $_ix, _jy, _kz \in \Ob Gr(R)$ and each $f=(f_a^p)_{a \in \calC(i,j), p \in \mathbb{Z}}\in Gr(R)(_ix, _jy)$, $g=(g_b^q)_{b \in \calC(j,k), q \in \mathbb{Z}}\in Gr(R)(_jy, _kz)$, we set 
    \begin{align*}
	  g \circ f :=  & \left( \sum_{\substack{a \in \calC(i, j)\\ b \in \calC(j, k)\\ c=ba}} \sum_{p, r \in \mathbb{Z}} (-1)^{(n-r)r} g_b^{n-r-p} \circ (R(b)f_a)^p \circ (\theta_{b,a}x)^r \right)_{c \in \calC(i, k), n \in \mathbb{Z}}\\
    \end{align*}
     \item for each $_ix \in \Ob Gr(R)$ the identity $1_{_ix}$ is given by
     $$
     1_{_ix}:=(\delta_{a,1_i} \eta_ix)_{a \in \calC(i, i)} \in \bigoplus_{a \in \calC(i,i)}R(i)(R(a)x,x) = \bigoplus_{a \in \calC(i,i)} \bigoplus_{p \in \mathbb{Z}} R(i)^p(R(a)x,x),
     $$
     where $\eta_ix: R(1_i)x \to 1_{R(i)}x=x$, and $\delta_{a,1_i}$ is the Kronecker delta (\emph{not} a 2-isomorphism $\delta_i$ in a pseudofunctor!), that is, the $a$-th component of $1_{_ix}$ is $\eta_ix$ if $a=1_i$, and $0$ otherwise. 
 \end{enumerate}
\end{definition}

\subsection{Torsion pairs, TTF triples and Abelian recollements} \label{tta}
In this subsection, the definitions of torsion pairs, TTF triples and Abelian recollements will be recalled. This part is taken from \cite[Section 2.3]{Wu24}, and we record it here for the  convenience of the readers. 

\subsubsection{Torsion pairs}
Torsion theories (also called torsion pairs) were introduced by Dickson \cite{Dic66} in general setting of Abelian categories, taking as a model the classical theory of torsion Abelian groups. Torsion pairs have played an important role in studying the Grothendieck categories and their localizations.

\begin{definition}  \label{torpairdef} 
Let $\mathcal{A}$ be an Abelian category. A \emph{torsion pair} (or \emph{torsion theory}) in $\mathcal{A}$ is a pair of full subcategories $(\mathcal{X}, \mathcal{Y})$ such that:

\begin{enumerate}
    \item $\mathcal{X}=^{\perp}\mathcal{Y}$ and $\mathcal{Y}=\mathcal{X}^{\perp}$, where
    $$
    ^{\perp}\mathcal{Y}:=\{ a \in \Ob \mathcal{A}\ |\ \Hom_{\mathcal{A}}(a, y)=0, \forall\ y \in \Ob \mathcal{Y} \},
    $$
    $$
    \mathcal{X}^{\perp}:=\{ a \in \Ob \mathcal{A}\ |\ \Hom_{\mathcal{A}}(x, a)=0, \forall\ x \in \Ob \mathcal{X} \}.
    $$
    \item for each $a \in \Ob \mathcal{A}$, there is an exact sequence 
    \begin{align}\label{sqe}
    0 \to x_a \to a \to y_a \to 0, \tag{\dag}
    \end{align}
    with $x_a \in \Ob \mathcal{X}$ and $y_a \in \Ob \mathcal{Y}$.
\end{enumerate}
\end{definition}

If $(\mathcal{X}, \mathcal{Y})$ is a torsion pair in $\mathcal{A}$, then $\mathcal{X}$ is called a \emph{torsion class} and $\mathcal{Y}$ is called a \emph{torsion-free class}. A subcategory of $\mathcal{A}$ is the torsion class (resp. torsion-free class) of some torsion pair if and only if it is closed under quotients, direct sums and extensions (resp. subobjects, direct products and extensions, respectively), see \cite[Theorem 2.3]{Dic66}. 

There are some special kinds of torsion pairs will be considered in this paper.

\begin{definition}
\begin{enumerate}
    \item A torsion pair $(\mathcal{X}, \mathcal{Y})$ in $\mathcal{A}$ is called \emph{hereditary} if $\mathcal{X}$ is also closed under subobjects.
    \item A torsion pair $(\mathcal{X}, \mathcal{Y})$ is said to be \emph{split} if, for any object $x \in \Ob \mathcal{X}$, the canonical sequence (\ref{sqe}) splits.
\end{enumerate}
\end{definition}

\subsubsection{TTF triples}
In 1965, Jans \cite{Jan65}  first introduced the notion of torsion torsion-free theory which is now called a \emph{TTF triple}. In this subsection, we will recall the definitions of TTF triples and split TTF triples, as well as the TTF triples generated by a class of objects.

\begin{definition}
\begin{enumerate}
    \item A \emph{TTF triple} in an Abelian category $\mathcal{A}$ is a triple of full subcategories $(\mathcal{X}, \mathcal{Y}, \mathcal{Z})$ such that both $(\mathcal{X}, \mathcal{Y})$ and $(\mathcal{Y}, \mathcal{Z})$ are torsion pairs. 
    \item A TTF triple $(\mathcal{X}, \mathcal{Y}, \mathcal{Z})$ is said to \emph{split} if both torsion pairs $(\mathcal{X}, \mathcal{Y})$ and $(\mathcal{Y}, \mathcal{Z})$ split.
\end{enumerate}   
\end{definition}

\begin{definition} (\cite[Definition 4.6]{PSV21})
Let $(\mathcal{X}, \mathcal{Y}, \mathcal{Z})$ be a TTF triple in $(\mathcal{A}^{\op}, \Ab)$ and let $\mathcal{S}$ be a class of right $\mathcal{A}$-modules. We say that $(\mathcal{X}, \mathcal{Y}, \mathcal{Z})$ is generated by $\mathcal{S}$ when the torsion pair $(\mathcal{X}, \mathcal{Y})$ is \emph{generated by $\mathcal{S}$}, i.e., when $ \mathcal{Y}=\mathcal{S}^{\perp}$ (the meaning of $(-)^{\perp}$ here is the same as that of Definition \ref{torpairdef}). Furthermore, we say that $(\mathcal{X}, \mathcal{Y}, \mathcal{Z})$ is \emph{generated by finitely generated projective $\mathcal{A}$-modules} when it is generated by a set of finitely generated projective objects of $(\mathcal{A}^{\op}, \Ab)$.    
\end{definition}

\subsubsection{Abelian recollements}
There is an alternative way to think about the TTF triples, namely, the Abelian recollements. Recollements were first introduced in the context of triangulated categories by Beilinson, Bernstein and Deligne \cite{BBD82}. A fundamental example of a recollement of Abelian categories appeared in the construction of perverse sheaves by MacPherson and Vilonen \cite{MV86}. Now, for the convenience of the readers, we will record the definition of the Abelian recollement of $(\mathcal{A}^{\op}, \Ab)$ below.

\begin{definition}
 Let $\mathcal{A}$ be a small preadditive category. A \emph{recollement} $\mathscr{R}$ of $(\mathcal{A}^{\op}, \Ab)$ by Abelian categories $\mathcal{X}$ and $\mathcal{Y}$ (also called an \emph{Abelian recollement}) is a diagram of additive functors
$$
\xymatrix @C=4pc {
\mathscr{R}:\ \mathcal{Y} \ar[rr]|(.5){i_*} & & (\mathcal{A}^{\op}, \Ab) \ar@/^1pc/[ll]^(0.5){i^!} \ar@/_1pc/[ll]_(0.5){i^*}  \ar[rr]|(.5){j^*} & & \mathcal{X} \ar@/^1pc/[ll]^(0.5){j_*} \ar@/_1pc/[ll]_(0.5){j_!}
} 
$$ 
satisfying the following conditions:
\begin{enumerate}
    \item $(i^*, i_*, i^!)$ and $(j_!, j^*, j_*)$ are adjoint triples;
    \item the functors $i_*$, $j_!$ and $j_*$ are fully faithful;
    \item ${\rm Im}(i_*)={\rm Ker}(j^*)$.
\end{enumerate}
\end{definition}

Two Abelian recollements $\mathscr{R}: (\mathcal{Y}, (\mathcal{A}^{\op}, \Ab), \mathcal{X})$ and $\mathscr{R}': (\mathcal{Y}', (\mathcal{A}^{\op}, \Ab), \mathcal{X}')$ of $(\mathcal{A}^{\op}, \Ab)$ are said to be \emph{equivalent} if there are equivalences $\Phi: (\mathcal{A}^{\op}, \Ab) \to (\mathcal{A}^{\op}, \Ab)$ and $\Psi: \mathcal{X} \to \mathcal{X}'$ such that the following diagram commutes, up to natural isomorphism:
$$
\xymatrix @C=8pc {
(\mathcal{A}^{\op}, \Ab) \ar[r]^(.55){j^*} \ar[d]_{\Phi} & \mathcal{X} \ar[d]^{\Psi} \\
(\mathcal{A}^{\op}, \Ab) \ar[r]^(.55){(j^*)'} & \mathcal{X}'
} 
$$

\section{The characterization of modules over dg-representations of a small category} \label{cha}
In this section, a characterization of the category of modules $\rMod R$ over a dg-representation of a small category will be given. More specifically, we will characterize the category $\rMod R$ as an ordinary category of dg-modules over $Gr(R)$. By a theorem of Freyd, we can also characterize $\rMod R$ as an Ab-valued functor category. 

\begin{theorem} \label{howeplus}
Let $\calC$ be a small category and let $R: \calC \to \dgCat_k$ be a dg-representation of the category $\calC$, then we have the following equivalence
 $$
 \rMod R \simeq  (Gr(R)^{\op}, \dgCh(k)).
 $$    
\end{theorem}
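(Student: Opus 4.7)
The plan is to build a pair of mutually quasi-inverse functors between $\rMod R$ and $(Gr(R)^{\op}, \dgCh(k))$; the nontrivial content will be bookkeeping the pseudofunctor coherences and the Koszul signs that appear in the composition law of $Gr(R)$ from Definition \ref{lingrocon}.

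First I would define $\Phi: \rMod R \to (Gr(R)^{\op}, \dgCh(k))$ as follows. Given a right module $M=(M_i, M(a))_{i,a}$, set $\Phi(M)({}_ix) := M_i(x) \in \dgCh(k)$. A morphism ${}_ix \to {}_jy$ in $Gr(R)$ has the form $f=(f_a)_{a \in \calC(i,j)}$ with $f_a \in R(j)(R(a)x, y)$; on the component $f_a$ define $\Phi(M)(f_a): M_j(y) \to M_i(x)$ to be the composite $M(a)_x \circ M_j(f_a)$, where $M_j(f_a): M_j(y) \to M_j(R(a)x) = (a^*M_j)(x)$ and $M(a)_x: (a^*M_j)(x) \to M_i(x)$. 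Extending $k$-linearly over the direct sum yields a chain map on hom-complexes. Dg-functoriality of $\Phi(M)$ on $Gr(R)^{\op}$ then amounts to two computations: preservation of composition uses (Mod.1), which absorbs the associator $\theta_{b,a}$ appearing in the composition formula of Definition \ref{lingrocon}, and preservation of identities uses (Mod.2) together with the fact that $1_{{}_ix}$ is concentrated in the $a=1_i$ summand with entry $\eta_i x$. A morphism $\varphi: M \to N$ of $R$-modules consists of dg-natural transformations $\varphi_i: M_i \to N_i$ intertwining the structure maps, and these assemble into a dg-natural transformation $\Phi(\varphi)$ with components $\varphi_{i,x}: M_i(x) \to N_i(x)$.

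Next I would construct a candidate inverse $\Psi$. Given a dg-functor $F: Gr(R)^{\op} \to \dgCh(k)$, set $\Psi(F)_i(x) := F({}_ix)$, with its $R(i)^{\op}$-action obtained from the embedding $R(i) \hookrightarrow Gr(R)$ that sends $g \in R(i)(x,x')$ to the element of $Gr(R)({}_ix, {}_ix')$ whose only nonzero component (in the $a=1_i$ summand) is $g \circ \eta_i x$. The transition morphism $\Psi(F)(a): a^*\Psi(F)_j \to \Psi(F)_i$ at $x$ is defined to be $F(\tau_{a,x})$, where $\tau_{a,x} \in Gr(R)({}_ix, {}_j(R(a)x))$ is the morphism whose $a$-component is $1_{R(a)x}$ and which vanishes elsewhere. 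Naturality in $x$ and the axioms (Mod.1)--(Mod.2) are verified by applying dg-functoriality of $F$ to the products $\tau_{b,R(a)x} \circ \tau_{a,x}$ and to the identity at ${}_ix$, computed by the composition rule of Definition \ref{lingrocon}; these products produce precisely the $\theta_{b,a}$ and $\eta_i$ data demanded by the module axioms.

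The main obstacle will be verifying that $\Phi$ and $\Psi$ are mutually quasi-inverse, together with the bijection of hom-sets. The key observation is that a dg-natural transformation $\Phi(M) \to \Phi(N)$ is determined by its values at the objects ${}_ix$, and its naturality squares split into two families: naturality against morphisms in the $a=1_i$ summand (which exactly encodes dg-naturality of $\varphi_j$ as a transformation $M_j \to N_j$) and naturality against the generators $\tau_{a,x}$ (which exactly encodes compatibility with the transition maps $M(a)$ and $N(a)$). In carrying this out, the Koszul sign $(-1)^{(n-r)r}$ in the composition formula of $Gr(R)$ must be matched with the graded commutation in $\dgCh(k)$ obtained when a component $f_a$ of degree $n-r-p$ is pushed past one of degree $r$; this is where the dg-case diverges from the additive case of \cite[Theorem 3.18]{EV17}. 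Once the sign bookkeeping is in place, the isomorphisms $\Phi \circ \Psi \cong \Id$ and $\Psi \circ \Phi \cong \Id$ follow by direct unwinding of the definitions.
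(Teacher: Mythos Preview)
Your proposal is correct and follows essentially the same approach as the paper: the paper defines the very same pair of functors $\Phi$ and $\Psi$ (with identical formulas on objects, morphisms, and for the transition maps via the canonical morphism $\tau_{a,x}$ with $a$-component $1_{R(a)x}$), and then declares the remaining verifications ``routine'' by analogy with the additive case. If anything, your outline is more explicit than the paper's about how (Mod.1)--(Mod.2) match the composition and identity law in $Gr(R)$ and about where the Koszul sign bookkeeping enters.
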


\begin{proof}
Let $M \in \rMod R$ and $F \in (Gr(R)^{\op}, \dgCh(k))$, we define
$$
 \Phi: \rMod R \to (Gr(R)^{\op}, \dgCh(k))
$$ 
and
$$
 \Psi: (Gr(R)^{\op}, \dgCh(k)) \to \rMod R
$$ 
by $\Phi(M)=F_M$ and $\Psi(F)=M_F$ respectively, where $F_M$ and $M_F$ are defined as follows. 

Given $M \in \rMod R$, let's define $F_M$ below:
$$
\xymatrix{
    Gr(R)^{\op} \ar[rr]^{F_M} &  & \dgCh(k) &  \\
   _ix  \ar@{}[u]|{\begin{sideways}$\in$\end{sideways}} \ar[dd]_{(\cdots,0,f_a^\bullet,0,\cdots)} & & M_i(x) \ar@{}[u]|{\begin{turn}{90}$\in$\end{turn}} & \\
     & \longmapsto & & M_j(R(a)x) \ar[ul]_{M(a)_x} \\
   _jy  & & M_j(y) \ar[uu]|{M(a)_x \circ M_j(f_a^\bullet)} \ar[ur]_{M_j(f_a^\bullet)} & \\
     } 
$$
where $f_a^\bullet \in R(j)(R(a)x, y)$ is a map in dg-category $R(j)$, and $M(a)_x: a^*M_j(x) \to M_i(x)$ is the natural transformation $M(a): a^*M_j \to M_i$ at object $x$. For a general morphism $(f_a^p)_{a, p} \in \Hom_{Gr(R)}(_ix,_jy)$, we define
$$
F_M[(f_a^p)_{a,p}]:=\sum_{a,f_a^\bullet}M(a)_x \circ M_j(f_a^\bullet).
$$

Given $F \in (Gr(R)^{\op}, \dgCh(k))$, let's define $M_F$ as follows:
$$
\xymatrix{
   i \in \Ob \calC & \overset{M_F}{\longmapsto} & (R(i)^{\op} \ar[rr]^{M_i} & & \dgCh(k)) \\
   &  & x \ar@{}[u]|{\begin{sideways}$\in$\end{sideways}} \ar[dd]_{g^\bullet} & & F(_ix) \ar@{}[u]|{\begin{sideways}$\in$\end{sideways}} \\
   & & & \longmapsto  & \\
   & & x' & & F(_ix') \ar[uu]_{F[(\cdots,0,g^\bullet \circ (\eta_ix)^\bullet,0,\cdots)]} \\
     } 
$$
where $g^\bullet \circ (\eta_ix)^\bullet$ is the composite of 
$$
R(1_i)x \overset{(\eta_ix)^\bullet}{\longrightarrow} 1_{R(i)}x=x \overset{g^\bullet}{\longrightarrow} x'.
$$
and $(\cdots,0,g^\bullet \circ (\eta_ix)^\bullet,0,\cdots)$ is a morphism of $\Hom_{Gr(R)}(_ix,_ix')$.
Recall that $a^*: (R(j)^{\op}, \dgCh(k)) \to (R(i)^{\op}, \dgCh(k))$ is the restriction functor along $R(a): R(i) \to R(j)$ (see Remark \ref{diff} $(2)$), we define $M(a): a^*M_j \to M_i$ as follows:
$$
\xymatrix @C=8pc {
   a^*M_j(x) \ar[r]^-{M(a)_x} \ar @{=} [d] & M_i(x) \ar @{=} [dd] \\
   M_j(R(a)x) \ar @{=} [d] &  \\
  F(_jR(a)x) \ar[r]^-{F[(\cdots,0,(1_{R(a)x})^\bullet,0,\cdots)]}  & F(_ix) \\
     } 
$$
where $(\cdots,0,(1_{R(a)x})^\bullet,0,\cdots)$ is a morphism in $\Hom_{Gr(R)}(_ix, _jR(a)x)$. Thus we define $M(a)_x:=F[(\cdots,0,(1_{R(a)x})^\bullet,0,\cdots)]$, where $(1_{R(a)x})^\bullet$ is in the $a$-th component. Then, like the proof of \cite[Theorem 3.1.4]{Wu24a}, it is routine to check that the functors $\Phi$, $\Psi$ are both well defined, and they give rise to the equivalence as desired. This completes the proof. 
\end{proof}

\begin{remark}
The Theorem above can be viewed as the dg-version of the results \cite[Theorem A]{Wu24a} and \cite[Theorem 3.0.1]{Wu24} (or see \cite[Proposition 5]{How81}) with trivial topology.
\end{remark}

Let $D^n(k)$ be the chain complex which is $k$ in degree $n-1$ and $n$ and $0$ elsewhere. As a consequence of the Theorem \ref{howeplus} above, we immediately have the following corollary. 

\begin{corollary} \label{Groproj}
Let $\calC$ be a small category and let $R: \calC \to \dgCat_k$ be a dg-representation of the category $\calC$. Then the category of right $R$-modules $\rMod R$ is a Grothendieck category and has a projective generator.    
\end{corollary}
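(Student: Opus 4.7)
The plan is to invoke the equivalence of Theorem \ref{howeplus} and then transport the two assertions to the ordinary module category $(Gr(R)^{\op}, \dgCh(k))$, where they follow from standard facts about right dg-modules over a dg-category. So throughout I work in $(Gr(R)^{\op}, \dgCh(k))$, keeping in mind that $\Hom$ here means the $0$-cycles of the natural dg-structure on the hom sets.

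First I would verify that $(Gr(R)^{\op}, \dgCh(k))$ is a Grothendieck category. Kernels, cokernels, products and coproducts are all computed pointwise in $\dgCh(k)$, which is itself Abelian with all small (co)limits; thus $(Gr(R)^{\op}, \dgCh(k))$ is Abelian and cocomplete. Since filtered colimits in $\dgCh(k)$ are exact (they are computed degreewise in $\rMod k$), the same is true in the functor category. So axioms (AB3) and (AB5) hold, and it remains only to produce a generator.

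Next I would exhibit the projective generator. For each object ${}_ix \in Gr(R)$ and each integer $n \in \mathbb{Z}$, set
$$
P_{{}_ix,n} := Gr(R)(-, {}_ix) \otimes_k D^n(k),
$$
where $D^n(k)$ is the disk complex defined just before the corollary. By the dg-Yoneda lemma, $\Hom_{\mathrm{dg}}(Gr(R)(-,{}_ix), M) \cong M({}_ix)$ in $\dgCh(k)$, and tensor-hom adjunction then gives
$$
\Hom(P_{{}_ix,n}, M) \;\cong\; \Hom_{\dgCh(k)}(D^n(k), M({}_ix)) \;\cong\; M({}_ix)^{n-1}
$$
naturally in $M$. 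Since picking out a single degree of a chain complex is an exact functor of $M$, each $P_{{}_ix,n}$ is projective in the ordinary category. Moreover, an object $M$ is zero iff every $M({}_ix)$ is the zero chain complex iff $M({}_ix)^{n-1} = 0$ for all ${}_ix$ and all $n$; hence the family $\{P_{{}_ix,n}\}$ jointly detects zero, so
$$
P := \bigoplus_{{}_ix \in \Ob Gr(R),\ n \in \mathbb{Z}} P_{{}_ix,n}
$$
is a projective generator. Transporting along the equivalence of Theorem \ref{howeplus} yields a projective generator of $\rMod R$ and completes the proof.

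The only genuinely nontrivial point is the identification $\Hom(P_{{}_ix,n}, M) \cong M({}_ix)^{n-1}$: one must remember that the ambient category is the ordinary (not dg) functor category, so one has to pass from the dg-Yoneda isomorphism of chain complexes to its $0$-cocycles, and then check that $\Hom_{\dgCh(k)}(D^n(k), -)$ really extracts the $(n-1)$-st component; this is the one verification where the distinction between the dg and ordinary structures bites. Everything else is a direct translation of facts about $\dgCh(k)$ to the pointwise-computed functor category $(Gr(R)^{\op}, \dgCh(k))$.
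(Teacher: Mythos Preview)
Your proof is correct and follows the same strategy as the paper: transport the problem via Theorem~\ref{howeplus} to $(Gr(R)^{\op}, \dgCh(k))$ and exhibit the projective generators as the representables $Gr(R)(-,{}_ix)$ tensored with the disk complexes $D^n(k)$. The only difference is presentational: the paper invokes \cite[Theorem 4.2]{AG16} and \cite[Corollary 3.4]{GJ19} as black boxes (writing the tensoring as $\oslash$ in the notation of \cite{AG16}), whereas you unpack the same statement via the dg-Yoneda lemma and tensor--hom adjunction directly.
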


\begin{proof}
Since the category of chain complexes $\Ch(k)$ has a set of finitely generated projective generators $D^n(k)$ (see \cite[page 1658]{EG98}), then by \cite[Theorem 4.2]{AG16}, we have that the category $(Gr(R)^{\op}, \dgCh(k))$ is a Grothendieck category with a set of small projective generators $\{ G_{_ix,n}:=Gr(R)(-,_ix) \oslash D^n(k)~|~ _ix \in \Ob Gr(R), n \in \mathbb{Z}\}$ (one can also see \cite[Corollary 3.4]{GJ19}). So does $\rMod R$ by Theorem \ref{howeplus}. Here $\oslash$ is a functor, one can see \cite[Definition 2.7]{AG16} for its explicit definition. Let $G:=\bigoplus_{_ix, n} G_{_ix,n}$, then $G$ is a projective generator of $\rMod R$.
\end{proof}

\begin{remark}
\begin{enumerate}
    \item This result can be seen as the dg-version of \cite[Theorem A]{Wu24a} (or \cite[Theorem 3.18]{EV17}).
    \item The projective generator $G$ in (the proof of) the Corollary \ref{Groproj} above is \emph{not} small even when $Gr(R)$ is object-finite, thus we \emph{can not} establish similar result as \cite[Theorem C]{Wu24a} and \cite[Theorem A]{WX23}.
\end{enumerate}   
\end{remark}

Let's view the set $\{ G_{_ix,n}~|~ _ix \in \Ob Gr(R), n \in \mathbb{Z}\}$ of finitely generated projective generators as a full subcategory of $\rMod R$, and denote it by $\calP$. Then one can characterize $\rMod R$ as an additive Ab-valued functor category via a theorem of Freyd.

\begin{corollary} \label{redab}
Let $\calC$ be a small category and let $R: \calC \to \dgCat_k$ be a dg-representation of the category $\calC$. Then there is an equivalence 
$$
\rMod R \simeq (\calP^{\op}, \Ab).
$$    
\end{corollary}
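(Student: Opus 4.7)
The plan is to invoke the classical Freyd--Mitchell representation theorem, as stated in \cite[Theorem 3.1]{Mit72}: if an abelian category $\mathcal{A}$ admits a set $\calP$ of small projective generators, then the restricted Yoneda functor
\[
\Phi : \mathcal{A} \longrightarrow (\calP^{\op}, \Ab), \qquad M \longmapsto \Hom_{\mathcal{A}}(-, M)\big|_{\calP}
\]
is an equivalence of categories. So the corollary reduces to checking that the family $\calP = \{\,G_{_ix,n} \mid {}_ix \in \Ob Gr(R),\ n \in \mathbb{Z}\,\}$ introduced just above the statement is genuinely a \emph{set} of \emph{small} projective generators of $\rMod R$.

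First, I would use the equivalence $\rMod R \simeq (Gr(R)^{\op}, \dgCh(k))$ of Theorem \ref{howeplus} to transport everything to the enriched functor category, where the relevant verification has already been done. Indeed, the objects $G_{_ix,n} = Gr(R)(-,\,{}_ix) \oslash D^n(k)$ are precisely the generators produced in the proof of Corollary \ref{Groproj}: since the complexes $D^n(k)$ are finitely generated projective generators of $\Ch(k)$ (\cite[page 1658]{EG98}), the result \cite[Theorem 4.2]{AG16} assembles them into a set of small projective generators of $(Gr(R)^{\op}, \dgCh(k))$. Transporting back along the equivalence of Theorem \ref{howeplus}, the family $\calP$ inherits exactly the required properties inside $\rMod R$.

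Having verified the hypotheses, a direct application of Freyd's theorem yields the desired equivalence $\rMod R \simeq (\calP^{\op}, \Ab)$. For orientation, the equivalence sends $M \in \rMod R$ to the presheaf $\Hom_{\rMod R}(-, M)|_{\calP}$; full faithfulness follows from $\calP$ being a generating set together with the smallness of its members, while essential surjectivity is obtained by writing any $F \in (\calP^{\op}, \Ab)$ as a coequalizer of sums of representables and lifting this presentation to a coequalizer of the corresponding sums of objects in $\calP$ inside $\rMod R$.

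The only point that genuinely needs care is the smallness of each $G_{_ix,n}$ (Freyd's theorem requires that $\Hom_{\rMod R}(G_{_ix,n}, -)$ commute with arbitrary coproducts, not just that $G_{_ix,n}$ be projective). This is exactly what \cite[Theorem 4.2]{AG16} supplies in the enriched setting, so no further argument is required and the corollary follows.
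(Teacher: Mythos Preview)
Your proposal is correct and follows essentially the same route as the paper: both invoke the proof of Corollary~\ref{Groproj} to see that $\calP$ is a set of small projective generators and then apply Freyd's theorem \cite[Theorem~3.1]{Mit72}. Your version simply unpacks a few more details (the form of the equivalence and the role of smallness), but the argument is identical.
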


\begin{proof}
By the proof of Corollary \ref{Groproj}, we know that $\{ G_{_ix,n}~|~ _ix \in \Ob Gr(R), n \in \mathbb{Z}\}$ is a set of small projective generators, then the equivalence in the corollary follows from \cite[Theorem 3.1]{Mit72}.   
\end{proof}

\section{Applications} \label{app}
In this section, as some applications of the results of the previous section, we will classify the hereditary torsion pairs, (split) TTF triples and Abelian recollements of the category $\rMod R$ of modules over a dg-representation of a small category. 

Our first result of this section indicates that the hereditary torsion pairs in the category of modules over a dg-representation of a small category can be classified by the linear Grothendieck topologies or the Gabriel filters. 

\begin{theorem} \label{htp}
Let $\calC$ be a small category and let $R: \calC \to \dgCat_k$ be a dg-representation of the category $\calC$. Let $\calP$ be a full subcategory of $\rMod R$ with the set of finitely generated projective generators $\{ G_{_ix,n}~|~ _ix \in \Ob Gr(R), n \in \mathbb{Z}\}$ as objects. Then
\begin{enumerate}
    \item There is an (explicit) one-to-one correspondence between linear Grothendieck topologies on $\calP$ and hereditary torsion pairs in $\rMod R$, and
    \item There exists a bijection between hereditary torsion theories in $\rMod R$ and families of Gabriel filters $\mathcal{L}_{G_{_ix,n}}$, of subobjects of the generators $\{ G_{_ix,n}~|~ _ix \in \Ob Gr(R), n \in \mathbb{Z}\}$ provided that they satisfy the conditions $1)$-$3)$ and $4')$ as listed in \cite[Proposition 3.2 \& 3.5]{LLN92}.     
\end{enumerate}
\end{theorem}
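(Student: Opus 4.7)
The plan is to reduce both parts of the statement to classical theorems about Abelian functor categories, by invoking the equivalence $\rMod R \simeq (\calP^{\op}, \Ab)$ provided by Corollary \ref{redab}. Since this is an equivalence of Abelian categories, it preserves all the categorical data that define a hereditary torsion pair (kernels, cokernels, subobjects, quotients, extensions, direct sums) and hence induces a bijection between hereditary torsion pairs in $\rMod R$ and hereditary torsion pairs in the Ab-valued functor category $(\calP^{\op}, \Ab)$ over the small preadditive category $\calP$. After this reduction the proof becomes an invocation of known results about $\Ab$-enriched functor categories.

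For part (1), I would appeal to the Gabriel--Lowen correspondence, stated in the preadditive setting in \cite[Section 2]{Low04}, which gives a one-to-one correspondence between linear Grothendieck topologies $\calJ$ on a small preadditive category $\calP$ and hereditary torsion pairs $(\mathcal{X}, \mathcal{Y})$ in $(\calP^{\op}, \Ab)$. Explicitly, $\calJ$ is sent to the torsion pair whose torsion-free class $\mathcal{Y}$ consists of those functors $F: \calP^{\op} \to \Ab$ such that for each $P \in \Ob \calP$ and each $s \in F(P)$, the subfunctor $\{\varphi: Q \to P \mid F(\varphi)(s) = 0\}$ of $\Hom_{\calP}(-, P)$ belongs to $\calJ(P)$; conversely a hereditary torsion pair is sent to the topology whose covering sieves of $P$ are the subfunctors $S \leq \Hom_{\calP}(-, P)$ for which $\Hom_{\calP}(-, P)/S$ is torsion. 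Composing this with the equivalence of Corollary \ref{redab} yields the explicit bijection claimed.

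For part (2), I would use \cite[Proposition 3.2 \& 3.5]{LLN92}, which reformulates hereditary torsion theories in a Grothendieck category with a distinguished set of projective generators in terms of families of Gabriel filters on those generators, subject to the compatibility conditions $1)$--$3)$ and $4')$ listed there. Since Corollary \ref{Groproj} equips $\rMod R$ with exactly such a set of generators, namely $\{G_{_ix,n}\mid {_ix} \in \Ob Gr(R),\ n \in \mathbb{Z}\}$, all hypotheses of \cite{LLN92} are in place, and the cited proposition immediately yields the bijection.

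The main obstacle I expect is bookkeeping rather than conceptual: one must verify that the passage through the equivalence $\rMod R \simeq (\calP^{\op}, \Ab)$ sends the generators $G_{_ix,n}$ of $\rMod R$ to the representable functors $\Hom_{\calP}(-, G_{_ix,n})$, so that the covering sieves of \cite{Low04} on the one side correspond, via Yoneda, to the Gabriel filters of subobjects of $G_{_ix,n}$ on the other side, and so that the compatibility axioms $1)$--$3)$, $4')$ of \cite{LLN92} match the topology axioms of Definition \ref{lintop}. Once this dictionary is set up, the two classification statements are obtained by direct quotation of the corresponding classical results.
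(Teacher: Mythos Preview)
Your proposal is correct and follows essentially the same approach as the paper: reduce via the equivalence $\rMod R \simeq (\calP^{\op},\Ab)$ of Corollary \ref{redab} and then invoke the classical bijections, with the only cosmetic difference that the paper cites \cite[Theorem 3.7]{PSV21} for part (1) and \cite[Proposition 3.4 \& 3.5]{LLN92} for part (2) rather than \cite{Low04} directly. Your additional discussion of the explicit dictionary and the Yoneda bookkeeping is more detailed than what the paper records, but the underlying argument is the same.
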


\begin{proof}
\begin{enumerate}
    \item By Corollary \ref{redab}, we have
$$
\rMod R \simeq (\calP^{\op}, \Ab).
$$
Then, by \cite[Theorem 3.7]{PSV21}, there is an explicit one-to-one correspondence between the linear Grothendieck topologies on $\calP$ and the hereditary torsion pairs in $(\calP^{\op}, \Ab)$. 
     \item  By Corollary \ref{Groproj} and its proof, we know that $\rMod R$ is a Grothendieck category with a set of finitely generated projective generators $\{ G_{_ix,n} \}$. Then the statement $(2)$ follows from \cite[Proposition 3.4 \& 3.5]{LLN92}. 
     This completes the proof. 
\end{enumerate}
\end{proof}

Our second result of this section says that one can use idempotent ideals of $\calP$ to classify the TTF triples in $\rMod R$. 

\begin{theorem} \label{ttf}
Let $\calC$ be a small category and let $R: \calC \to \dgCat_k$ be a dg-representation of the category $\calC$. Let $\calP$ be a full subcategory of $\rMod R$ with the set of finitely generated projective generators $\{ G_{_ix,n}~|~ _ix \in \Ob Gr(R), n \in \mathbb{Z}\}$ as objects. Then there is an (explicit) one-to-one correspondence between idempotent ideals of $\calP$ and TTF triples in $\rMod R$. 
\end{theorem}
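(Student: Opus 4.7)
The plan is to mirror the strategy used in Theorem \ref{htp}: transport the question into the Ab-valued functor category $(\calP^{\op}, \Ab)$ via Corollary \ref{redab}, then apply the preadditive-category analogue of Jans' classical theorem that classifies TTF triples by idempotent two-sided ideals.

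First, I would invoke Corollary \ref{redab} to obtain the equivalence of Abelian categories $\rMod R \simeq (\calP^{\op}, \Ab)$. Since the notion of a TTF triple is purely Abelian-categorical, built only from Hom-orthogonality and short exact sequences, any equivalence of Abelian categories induces a bijection between the TTF triples of the two sides. Hence TTF triples in $\rMod R$ correspond bijectively to TTF triples in $(\calP^{\op}, \Ab)$.

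Second, since $\calP$ is a small preadditive category, I would appeal to the classification recorded in \cite{PSV21}, which furnishes an explicit one-to-one correspondence between idempotent two-sided ideals $\mathcal{I}$ of $\calP$ and TTF triples $(\mathcal{X}_{\mathcal{I}}, \mathcal{Y}_{\mathcal{I}}, \mathcal{Z}_{\mathcal{I}})$ in $(\calP^{\op}, \Ab)$. Concretely, the central class $\mathcal{Y}_{\mathcal{I}}$ consists of those right $\calP$-modules annihilated by $\mathcal{I}$, while the ideal attached to a TTF triple is the trace of its torsion class in the regular bimodule $\calP(-,-)$; such a trace is automatically idempotent, which is the key structural input. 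Composing these two bijections yields the claimed correspondence.

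The main obstacle, such as it is, is bookkeeping rather than substance: to make the word ``explicit'' in the statement honest, one must track how the Freyd equivalence of Corollary \ref{redab} transports trace ideals and annihilation conditions back into the language of dg-representations. This unwinding is routine given the explicit form of the functors $\Phi$ and $\Psi$ constructed in the proof of Theorem \ref{howeplus}, and no new substantive argument beyond what is already invoked for Theorem \ref{htp} is required.
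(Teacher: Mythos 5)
Your proposal is correct and follows exactly the same route as the paper: apply Corollary \ref{redab} to reduce to $(\calP^{\op}, \Ab)$, then invoke the classification of TTF triples in a preadditive functor category by idempotent ideals from \cite{PSV21} (specifically Theorem 4.5 there). The extra commentary on traces and annihilation is a fair description of what that cited theorem does, but no new argument is needed beyond what the paper itself records.
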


\begin{proof}
By Corollary \ref{redab}, we have
$$
\rMod R \simeq (\calP^{\op}, \Ab).
$$
 Using \cite[Theorem 4.5]{PSV21}, there is an explicit one-to-one correspondence between idempotent ideals of $\calP$ and TTF triples in $(\calP^{\op}, \Ab)$, so we are done.
\end{proof}

Our third result says that the split TTF triples in $\rMod R$ are one-to-one correspond to the idempotents of the center $Z(\mathcal{P})$ of $\mathcal{P}$.

\begin{theorem} \label{sttf}
Let $\calC$ be a small category and let $R: \calC \to \dgCat_k$ be a dg-representation of the category $\calC$. Let $\calP$ be a full subcategory of $\rMod R$ with the set of finitely generated projective generators $\{ G_{_ix,n}~|~ _ix \in \Ob Gr(R), n \in \mathbb{Z}\}$ as objects. Then there is an (explicit) one-to-one correspondence between idempotents of the center $Z(\calP)$ of $\calP$ and split TTF triples in $\rMod R$.      
\end{theorem}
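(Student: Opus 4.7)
The plan is to proceed in the same style as Theorems \ref{htp} and \ref{ttf}: reduce the problem to the $\Ab$-valued functor category $(\calP^{\op}, \Ab)$ via the equivalence of Corollary \ref{redab}, and then invoke an existing classification result for split TTF triples in such functor categories.

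More precisely, I would first apply Corollary \ref{redab} to obtain the equivalence of Abelian categories
$$
\rMod R \simeq (\calP^{\op}, \Ab).
$$
Because every equivalence of Abelian categories sends torsion pairs to torsion pairs, TTF triples to TTF triples, and preserves splittability of short exact sequences, split TTF triples in $\rMod R$ correspond bijectively (and naturally) to split TTF triples in $(\calP^{\op}, \Ab)$. Moreover, the center of an Abelian category is an invariant of the category (it is the endomorphism ring of the identity functor), so there is a canonical ring isomorphism between $Z(\calP)$ and the center of the image of $\calP$ under the equivalence, and hence a bijection between their idempotents.

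Second, I would quote the appropriate result from \cite{PSV21}, which in that paper treats precisely the classification of split TTF triples in $(\calP^{\op}, \Ab)$ in terms of central idempotents of the generating subcategory $\calP$. This gives an explicit one-to-one correspondence between idempotents of $Z(\calP)$ and split TTF triples in $(\calP^{\op}, \Ab)$. Combining the two steps yields the desired bijection for $\rMod R$, and makes the correspondence explicit: an idempotent $e \in Z(\calP)$ corresponds to the split TTF triple whose associated torsion and torsion-free classes are cut out by the kernel and image projections induced by $e$ via the equivalence of Corollary \ref{redab}.

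I do not expect any serious obstacle here; the argument is essentially formal transport of structure along the equivalence $\rMod R \simeq (\calP^{\op}, \Ab)$. The only point requiring mild care is to cite the correct statement in \cite{PSV21} (presumably a companion to \cite[Theorem 4.5]{PSV21}) that handles split TTF triples specifically, and to note that the naturality of the equivalence ensures that the bijection can be described explicitly, not merely abstractly.
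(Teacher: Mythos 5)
Your proposal matches the paper's argument: apply Corollary \ref{redab} to reduce to $(\calP^{\op}, \Ab)$ and then invoke the classification of split TTF triples from \cite{PSV21} (the paper cites Theorem D and Proposition 4.13 there). The reasoning about transport of structure along the equivalence is exactly what the paper leaves implicit, so this is essentially the same proof.
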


\begin{proof}
Using Corollary \ref{redab} together with \cite[Theorem D and Proposition 4.13]{PSV21}, then we will obtain the result.  
\end{proof}

The last result is devoted to investigate the Abelian recollements in the category of $\rMod R$, and we obtain a classification of the Abelian recollements in $\rMod R$ as follows.  

\begin{theorem} \label{ar}
There are (explicit) one-to-one correspondences between:
\begin{enumerate}
    \item the equivalence classes of recollements of $\rMod R$ by module categories over small preadditive categories;
    \item the TTF triples in $\rMod R$ generated by finitely generated projective $\calP$-modules;
    \item the idempotent ideals of $\calP$ which are the trace of a set of finitely generated projective $\calP$-modules;
    \item the idempotent ideals of the additive closure $\widehat{\calP}$ of $\calP$ generated by a set of idempotent endomorphisms.
    \item the full subcategory of the Cauchy completion $\widehat{\calP}_\oplus$ which are closed under coproducts and summands. 
\end{enumerate}
\end{theorem}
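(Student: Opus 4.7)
The plan is to derive all five correspondences as an immediate consequence of Corollary \ref{redab} combined with the classification machinery developed by Parra, Saor\'in and Virili in \cite{PSV21} for recollements of functor categories $(\calA^{\op}, \Ab)$. The key observation is that, thanks to the equivalence $\rMod R \simeq (\calP^{\op}, \Ab)$, the category $\rMod R$ is literally of the form studied in \cite{PSV21}, so what is needed is to transport the bijections there along the explicit equivalence.

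First I would invoke Corollary \ref{redab} to replace $\rMod R$ by $(\calP^{\op}, \Ab)$, where $\calP$ is the full subcategory of $\rMod R$ whose objects are the set of finitely generated projective generators $\{G_{_ix,n} \mid {_ix} \in \Ob Gr(R),\ n \in \mathbb{Z}\}$ produced in the proof of Corollary \ref{Groproj}. Any equivalence of Abelian categories preserves and reflects recollements, torsion pairs, TTF triples and the projectivity/finite-generation of torsion classes, so it suffices to establish the five-way bijection for $(\calP^{\op}, \Ab)$.

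Next I would match each item with the corresponding classification in \cite{PSV21}. The bijection between (1) and (2) follows from \cite[Theorem D]{PSV21} (together with the discussion of Abelian recollements therein), which identifies equivalence classes of recollements of $(\calP^{\op}, \Ab)$ by module categories over small preadditive categories with TTF triples generated by finitely generated projective $\calP$-modules. The bijection (2)$\leftrightarrow$(3) is a refinement of Theorem \ref{ttf}: under the correspondence between TTF triples in $(\calP^{\op}, \Ab)$ and idempotent ideals of $\calP$, those generated by finitely generated projective modules correspond precisely to trace ideals of such modules, as in \cite[Proposition 4.6]{PSV21}. The bijections (3)$\leftrightarrow$(4) and (4)$\leftrightarrow$(5) are then purely categorical: they follow from the results of \cite[Section 2]{PSV21} relating ideals of a small preadditive category to idempotent ideals of its additive closure $\widehat{\calP}$ generated by idempotent endomorphisms, and further to full subcategories of the Cauchy completion $\widehat{\calP}_\oplus$ closed under coproducts and summands.

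The main obstacle is not a technical one but rather a bookkeeping issue: one must verify that the explicit preadditive category $\calP$ of Corollary \ref{redab} satisfies the hypotheses invoked in each of the cited results from \cite{PSV21}. All of these hypotheses amount to $\calP$ being a small preadditive subcategory of $\rMod R$ whose objects form a set of finitely generated projective generators, which is exactly what the proof of Corollary \ref{Groproj} establishes (using \cite[Theorem 4.2]{AG16} and the generators $G_{_ix,n}$ of $\Ch(k)$). Once this is in place, stringing together the four cited bijections yields the five-way correspondence, and tracking the construction through the equivalence $\rMod R \simeq (\calP^{\op}, \Ab)$ makes each of them explicit.
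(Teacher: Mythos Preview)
Your proposal is correct and takes essentially the same approach as the paper: reduce to $(\calP^{\op}, \Ab)$ via Corollary \ref{redab} and then invoke the classification results of \cite{PSV21}. The only difference is cosmetic---the paper cites the single packaged result \cite[Theorem 4.8]{PSV21}, which already contains all five bijections, whereas you unpack this into several separate citations (Theorem D, Proposition 4.6, Section 2).
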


\begin{proof}
These bijections follow from Corollary \ref{redab} and \cite[Theorem 4.8]{PSV21}. 
\end{proof}

\section*{Acknowledgments}
I would like to thank my advisor Prof. Fei Xu \begin{CJK*}{UTF8}{}
\CJKtilde \CJKfamily{gbsn}(徐斐) \end{CJK*} in Shantou University for his guidance.

\bibliographystyle{plain}
\bibliography{ref}
\end{document}